\theoremstyle{plain}
\newtheorem{theorem}{Theorem}[section]
\newtheorem{lemma}[theorem]{Lemma}
\newtheorem{proposition}[theorem]{Proposition}
\numberwithin{equation}{section}
\theoremstyle{plain}
\theoremstyle{remark}
\def\bbR{{\mathbb {R}}}
\def\bbE{{\mathbb {E}}}
\def\cS{\mathcal S}
\begin{document}

\date{August, 2008}

\title
{Lower bounds for dimensions of sums of sets}

\author[]
{Daniel M. Oberlin}

\address
{D. M.  Oberlin \\
Department of Mathematics \\ Florida State University \\
 Tallahassee, FL 32306}
\email{oberlin@math.fsu.edu}


\thanks{
This work was supported in part by NSF grant DMS-0552041.
}

\begin{abstract}
We study lower bounds for the Minkowski and Hausdorff dimensions of the algebraic 
sum $E+K$ of two sets $E,K\subset\bbR^d$.
\end{abstract}

\maketitle

\section{Introduction}
Suppose $E,K\subset\bbR^d$ are compact sets. We are interested in finding lower bounds for the Hausdorff and Minkowski dimensions of the 
sum set $E+K$ which are better than the trivial lower bound 
\begin{equation}\label{est.5}
\dim (E+K)\geq\max \big(\dim (E),\,\dim (K)\big).
\end{equation}
Our general approach will be to 
fix a \lq\lq nice" $K$ and to look for some function $\Phi \big(K,\dim (E)\big)$ such that 
\begin{equation}\label{est0}
\dim (E+K)\geq\Phi \big(K,\dim (E)\big)>\max \big(\dim (E),\,\dim (K)\big)
\end{equation}
for all Borel $E\subset\bbR^d$ with $\dim (E)<d$. (There are well-known results for \lq\lq generic"
sums, e.g. the fact, a consequence of \cite{M}, that if $E,K\subset\bbR$, then $$\dim (E+tK )\geq\min \{\dim (E )+\dim (K),1\}$$ for almost all $t\in\bbR$. And there are also some interesting results requiring special hypotheses on both $E$ and $K$. One is in \cite{PS}: 
suppose $a\in (0,1/2)$ and let $C_a$ be the Cantor set
$$
\big\{(1-a)\sum_{j=0}^{\infty}\omega _j a^j\,:\omega_j \in \{0,1\}\big\}.
$$
Then $\dim (C_a +C_b )=\min \{\dim (C_a )+\dim (C_b),1\}$ if $\log (b)/\log (a)$ is irrational. See also \cite{Sh}.)
 
Our observations will fall into two classes, depending on whether $\dim$ means Minkowski dimension or Hausdorff dimension. 

The case of (upper) Minkowski dimension $\dim_m$ is easier and the example of a curve $K$ in $\bbR^2$ 
appears to be typical here: if $K$ is a line segment then
one cannot improve the trivial bound; if $K$ is not a line segment then we will show that 
\begin{equation}\label{est1}
\dim_m (E+K) \geq  1+\frac{\dim_m (E)}{2}.
\end{equation}
It is easy to see that, without further hypotheses on the curve $K$, \eqref{est1} cannot be improved: let $E$ be the cartesian product of two copies of some 
Cantor set and let $K\doteq K_0$ be a curve consisting of a horizontal line segment followed by a vertical one. 

Our result for Minkowski dimension will be a generalization of \eqref{est1}:
we will observe that the lower bound
\begin{equation}\label{est2}
\dim_m (E+K) \geq  \dim_m (K)+\dim_m (E) -\frac{\dim_m (K) \dim_m (E)}{d}
\end{equation}
holds for certain Cantor sets $K\subset\bbR$ and 
also whenever $K$ is a $k$-dimensional manifold in $\bbR^d$ which satisfies a certain nondegeneracy condition. 

For Hausdorff dimension $\dim_h$ we know very little. We will make the trivial observation that if $K\subset\bbR^d$ is a Salem set, then there is
the optimal estimate $\dim_h (E+K) =\min\{\dim_h (E)+\dim_h (K),d\}$. We will  show that if $K$ is the \lq\lq middle thirds" Cantor set in $\bbR$, then 
\begin{equation}\label{est3}
\dim_h  (E+K) \geq \frac{1+\dim_h (E)}{2}
\end{equation}
(which improves the trivial bound only if $\dim_h (E)>2\,{\log 2}/{\log 3}-1$).
We will prove a result concerning convolution estimates which yields 
the analog of \eqref{est2} for a few model surfaces. And we will conclude with some remarks about nondegenerate curves.

We will consider Minkowski dimension in  \S\ref{Minkowski} and Hausdorff dimension in  \S\ref{Hausdorff}.

\section{Minkowski dimension}\label{Minkowski}

If $E\subset\bbR^d$ then $\dim_m (E)\geq \beta$ is equivalent to
\begin{equation}\label{est3.5}
m_d \big(E+B(0,\delta)\big)\geq C(\epsilon )\,\delta^{d-\beta+\epsilon}
\end{equation}
for $\epsilon >0$ and small $\delta >0$. Our strategy for proving \eqref{est2} depends on the observation that \eqref{est2} is implied 
by the estimate, to hold for all for Borel $F\subset \bbR^d$, 
\begin{equation}\label{est4}
m_d (F+K)\geq C(K)\,m_d (F)^{1-\alpha /d}
\end{equation}
where $\alpha=\dim_m (K)$: if \eqref{est4} holds and $\dim_m (E)\geq\beta$, then \eqref{est3.5} implies that 
$$
m_d \big(E+K+B(0,\delta )\big)\gtrsim \delta ^{d-(\alpha +\beta -\alpha\beta /d)+\epsilon '}
$$
where $\epsilon '=\epsilon(1-\alpha /d)$. Inequalities \eqref{est4} were the subject of the two papers \cite{O1} and \cite{O2}, and we will rely on 
results and ideas from those papers as we verify \eqref{est2} for various Cantor sets and $k$-surfaces. 

The results of \cite{O2} imply inequalities \eqref{est4} for certain Cantor sets $K$. To describe them we establish some notation. Fix a positive integer $n\geq 3$ and let $G(n)=\{0,1,\dots ,n-1\}$, which we will interpret as either a set of integers or as a realization of the group of integers modulo $n$. Fix a subset $S\subset G(n)$ such that $0\in S$ and consider the generalized Cantor set $K\subset [0,1]$ consisting of all sums $\sum_{j=1}^{\infty}s_j n^{-j}$ such that each $s_j \in S$. Then the Hausdorff and Minkowski dimensions of $K$ are both equal to $\log (|S|)/\log (n)$. Let $\widetilde{m}_n$ be normalized counting measure on $G(n)$ and $\widetilde{m}$ be Lebesgue measure on $[0,1)$ (which, when equipped with addition modulo $1$,  we regard here as a 
realization of the circle group). Theorem 2 of \cite{O2} states that if $\gamma\in (0,1)$ and the inequality
\begin{equation}\label{est5}
\widetilde{m}_n (E+S) \geq \widetilde{m}_n (E)^\gamma 
\end{equation} 
holds for $E\subset G(n)$, where the addition is in the group $G(n)$, then the inequality 
\begin{equation*}
\widetilde{m}(E+K)\geq \widetilde{m}(E)^\gamma 
\end{equation*} 
holds for $E\subset [0,1)$, with addition modulo $1$. Unwrapping the addition modulo $1$, this implies 
\begin{equation*}
m_1 (F+K)\geq m_1 (F)^\gamma /2 
\end{equation*}
for Borel $F\subset \bbR$. It is easy to verify that \eqref{est5} holds if $|S|=n-1$ and $\gamma =1-\log (|S|)/\log (n)$ (the case $n=3$ yielding the classical Cantor set). Thus, for these Cantor sets
$K$, \eqref{est4} and so \eqref{est2} hold.
There are also a few other cases where \eqref{est4} and \eqref{est2} are true, $n=5$ and $|S|=2$ giving one example. But the method of \cite{O2} is not very flexible 
and, in particular, does not seem to be up to solving the interesting problem of establishing \eqref{est4} for all of the Cantor sets $C_a$ mentioned in the introduction.

We now consider the situation when $K$ is a $k$-surface in $\bbR^d$. The simplest case is when $K$ is a curve in $\bbR^2$. As previously mentioned,
if $K$ is a line segment - which in this setting we consider to be a degenerate curve -  we cannot improve on the trivial bound \eqref{est.5}. If $K$ is not a line segment, an estimate of the form \eqref{est4} for $K$ follows from the more general estimate (Theorem B in \cite{O1}):
\begin{equation}\label{est6}
\sqrt{m_d (K-K)m_d (E)}\leq m_d (K+E).
\end{equation}
To be specific, if the curve $K$ is nondegenerate, i.e., not a line segment, then $m_2 (K-K)>0$ and so \eqref{est6} gives \eqref{est4} with
$\alpha =1,\,d=2$. This, in turn, gives \eqref{est1}. 

A first step towards generalizing \eqref{est1} is to find an appropriate notion of
nondegeneracy for a $k$-surface $K$ in $\bbR^d$. We will refer to a mapping $\Psi :K^d \rightarrow (\bbR^d )^k$ of the form 
\begin{equation}\label{infl}
\Psi :
(x_1 ,\dots ,x_d )\mapsto (\sum_{j=1}^d \tau_j^1 x_j ,\dots , \sum_{j=1}^d \tau_j^k x_j ),\ \tau_j^k \in\{-1,1\}
\end{equation}
as an \textit {inflation map} (the word \lq\lq inflation" in this context comes from \cite{C}). For example, if $k=1,\,d=2$, then $\Psi (x_1 ,x_2 )=x_1 -x_2$
is an inflation map.  We will consider the $k$-surface $K\subset\bbR^d$ to be \textit{nondegenerate} if there is an inflation map $\Psi$ such that
$\Psi (K^d )$ has positive Lebesgue measure in $ (\bbR^d )^k$. We would like to prove that if $K$ is a nondegenerate $k$-surface in $\bbR^d$, then 
\eqref{est4} holds with $\alpha =k$, and so \eqref{est2} holds. Unfortunately, the proofs that we have are tied to particular inflation maps. Thus, in particular, we do not even know whether there is an analog of \eqref{est6} with $K+K$ in place of $K-K$. The current situation is most satisfactory 
when $k=d-1$: Proposition 4 in \cite{O1} shows that if 
$$
\{(x_1 -x_2 ,\dots ,x_1 -x_d ):\ x_j \in K\}
$$
has positive Lebesgue measure in  $ (\bbR^d )^{d-1}$, then \eqref{est4} holds with $\alpha =d-1$. Thus \eqref{est2} holds.

The results of \cite{O1} were phrased in terms of a particular inflation map for $k$-surfaces in $\bbR^d$:
$$
\Psi_0  :(x_1 ,\dots ,x_d )\mapsto (x_1 +\cdots +x_{l}-x_{l+1},\dots ,x_1 +\cdots +x_{l}-x_{d}),\ l=d-k.
$$
Unfortunately (and unfortunately unnoticed when \cite{O1} was written), unless $k=1$ or $k=d-1$ or $d\leq 4$, 
if $K$ is a reasonable $k$-surface in $\bbR^d$, then $\Psi_0$ can never map $K^d$ onto a set of positive 
measure in $ (\bbR^d )^k$: roughly, the term $x_1 +\cdots +x_l$ uses up $lk$ dimensions but can make a contribution of only $d$ dimensions 
to the range of $\Psi_0$, and $lk=(d-k)k\leq d$ forces $k=1$ or $k=d$ unless $d\leq 4$. To rule out such $\Psi$ we now add 
another condition to the definition of inflation map: suppose $\{e_1 ,\dots ,e_d\}$ is the usual basis for $\bbR^d$ and define $K_0 \subset\bbR^d$ 
by
\begin{equation}\label{K}
K_0 =\bigcup_{\{1\leq i_1 <\cdots  <i_k \leq d\}}
\Big\{ a_{i_1} e_{i_1}+\cdots +a_{i_k} e_{i_k}:-\frac{1}{2}\leq a_{i_j} \leq \frac{1}{2}\Big\}
\end{equation}
so that $K_0$ is $k$-dimensional and analogous to the curve $K_0$ mentioned after \eqref{est1}. 
Then an \textit{inflation map} is (re)defined to be a map $\Psi$ of the form \eqref{infl} for which $\Psi \big((K_0)^d \big)$ has positive Lebesgue measure in 
$(\bbR^d )^k$. Here is an example (and for the remainder of this paper, with $k$ and $d$ fixed, $\Psi$ will stand for this particular example).
Write $d=qk+r$ with $q,r$ nonnegative integers and $0\leq r<k$. Define $\Psi$ by
\begin{equation*}
\Psi  :x=(x_1 ,\dots ,x_d )\mapsto \big(\psi _1 (x),\dots ,\psi _k (x)\big) 
\end{equation*}
where, for $1\leq j\leq k$, we choose $n_j \in\{ 0,1,\dots ,r-1\}$ such that 
$$
n_j k<jr\leq (n_j +1)k
$$
and then set
\begin{equation*}
\psi_j (x) =(x_d +x_{d-1}+\cdots +x_{d-n_j})+(x_{jq} +x_{jq-1}+\cdots +x_{jq-(q-2)} -x_{jq-(q-1)}) .
\end{equation*}
For example, if $d=5$ and $k=2$, then
$$
\Psi (x_1 ,\dots ,x_5 )=(x_5 +x_2 -x_1 ,x_5 +x_4 -x_3 ).
$$
Also, define $\psi_j^{'}$ and $\psi_j^{''}$ by 
\begin{equation}\label{defpsi}
\psi_j^{'} (x) =x_{jq} +x_{jq-1}+\cdots +x_{jq-(q-2)} -x_{jq-(q-1)},\ 
\psi_j^{''} (x) =x_d +x_{d-1}+\cdots +x_{d-n_j}
\end{equation} 
and 
\begin{equation}\label{def2}
\Psi^{'}(x)=\big(\psi_1^{'}(x),\dots ,\psi_k^{'}(x)\big),\,\,\Psi^{''}(x)=\big(\psi_1^{''}(x),\dots ,\psi_k^{''}(x)\big).
\end{equation}

With a view towards Lemma \ref{lemma1} below, as well as to establish that $\Psi \big( (K_0 )^d \big)$ has positive Lebesgue measure in $(\bbR^d )^k$, we now indicate how to construct probability measures $\lambda _1 , \dots ,\lambda _d$ on $K_0$ such that 
\begin{equation}\label{ineq}
\int_{K_0}\cdots\int_{K_0}f\big(\Psi (x_1 ,\dots ,x_d )\big)\,d\lambda_1 (x_1 )\cdots d\lambda_d (x_d)\leq\int_{(\bbR^d)^k}f\, dm_{dk}
 \end{equation}
for nonnegative functions $f$ on $(\bbR^d )^k$.  Each of the $\lambda_j$'s will be $k$-dimensional Lebesgue measure on one of the sets 
$$
\Big\{ a_{i_1} e_{i_1}+\cdots +a_{i_k} e_{i_k}:-\frac{1}{2}\leq a_{i_j} \leq \frac{1}{2}\Big\}
$$ 
from \eqref{K}. 

To give the idea, we first treat the case $d=5$ and $k=2$ mentioned above. Choose $\lambda_1$ and $\lambda_2$ so that, if $g$ is a function on $\bbR^5$, then
$$
\int_{K_0}\int_{K_0} g(x_2 -x_1 )\,d\lambda_2 (x_2)\,d\lambda_1 (x_1 )=\int_{-1/2}^{1/2}\cdots\int_{-1/2}^{1/2}g(0 ,a_2^1 ,a_3^1 ,a_4^1 ,a_5^1 )\, da_2^1 \cdots da_5^1 
$$
and then $\lambda_3$ and $\lambda_4$ so that 
$$
\int_{K_0}\int_{K_0} g(x_4 -x_3 )\,d\lambda_4 (x_4)\,d\lambda_3 (x_3 )=\int_{-1/2}^{1/2}\cdots\int_{-1/2}^{1/2}g(a_1^2 ,0 ,a_3^2 ,a_4^2 ,a_5^2)\, da_1^2  \, da_3^2 
\, da_4^2 \, da_5^2.
$$
Then choose $\lambda_5$ so that 
$$
\int_{K_0} g(x_5 )\,d\lambda_5 (x_5)=\int_{-1/2}^{1/2}\int_{-1/2}^{1/2}g(b_{1,1},b_{1,2},0,0,0)\, db_{1,1} \, db_{1,2} .
$$
Clearly
$$
\int_{K_0}\cdots\int_{K_0} f(x_5 +x_2 -x_1 ,x_5 +x_4 -x_3 )\, d\lambda_1 (x_1 )\cdots d\lambda_5 (x_5 )\leq\int_{(\bbR^5)^2} f\,dm_{10}
$$
for nonnegative functions $f$ on $\bbR^5 \times \bbR^5$, giving \eqref{ineq}.

In the general case we write $(a_1^1 ,\dots ,a_d^1 ;a_1^2 ,\dots ,a_d^2 ;\dots ;a_1^k ,\dots ,a_d^k )$ for an element of $(\bbR^d )^k$ 
and split the variables 
$a_i^j$ into two classes. For a nonnegative integer $i$, let $[i]$ satisfy $1\leq [i]\leq d$ and $[i]=i$ mod $d$. For $1\leq j\leq k$ 
we will say that the $r$ variables $a_{[(j-1)r+1]}^{j},a_{[(j-1)r+2]}^j ,\dots ,a_{[jr]}^j$ are in the second class and the remaining $kq=d-r$ variables
$\{a_{p_{j,n}}^j\}_{n=1}^{kq}$ are in the first class.

Choose the first $kq$ of the measures $\lambda_i$ so that, for $1\leq j\leq k$ 
$$
\int_{(K_0)^q} g \big(\psi_j^{'}(x_{(j-1)q+1},
\dots ,x_{jq})\big)\,d\lambda_{jq}(x_{jq})\cdots d\lambda_{(j-1)q+1}(x_{(j-1)q+1})
$$ 
is equal to 
$$
\int_{-1/2}^{1/2}\cdots\int_{-1/2}^{1/2}g\big(\sum_{n=1}^{kq}a_{p_{j,n}}^j e_{p_{j,n}} \big)\,da_{p_{j,1}}^j \cdots da_{p_{j,qk}}^j 
$$ 
for functions $g$ on $\bbR^d$.

Now rename the sequence
$$
a_{[1]}^1,a_{[2]}^1,\dots ,a_{[r]}^1,a_{[r+1]}^2,\dots ,a_{[2r]}^2,\dots ,a_{[(k-1)r+1]}^k,\dots ,a_{[kr]}^k
$$
of variables in the second class as the sequence
$$
b_{{1,1}},b_{{1,2}}\dots ,b_{{1,k}},b_{{2,1}},\dots ,b_{{2,k}},\dots ,b_{{r,1}},\dots ,b_{{r,k}}.
$$
and similarly rename the sequence
$$
e_{[1]},e_{[2]},\dots ,e_{[r]},e_{[r+1]},\dots ,e_{[2r]},\dots ,e_{[(k-1)r+1]},\dots ,e_{[kr]}
$$
of unit vectors as
$$
e_{q_{1,1}},e_{q_{1,2}}\dots ,e_{q_{1,k}},e_{q_{2,1}},\dots ,e_{q_{2,k}},\dots ,e_{q_{r,1}},\dots ,e_{q_{r,k}}.
$$
Since $k<d$ it is clear that for each $j=1,2,\dots ,r$ the $k$ unit vectors $e_{q_{j,1}},e_{q_{j,2}}\dots ,e_{q_{j,k}}$ are distinct. For such $j$ define 
the measures $\lambda_{d-j+1}$ on $K_0$ by 
$$
\int_{K_0} g(x_{d-j+1})\,d\lambda_{d-j+1}(x_{d-j+1})=\int_{-1/2}^{1/2}\cdots\int_{-1/2}^{1/2}g\big(\sum_{n=1}^k
b_{{j,n}}e_{q_{j,n}}\big)\,db_{{j,1}}\cdots b_{{j,k}}.
$$
Let $\Pi_2$ represent the projection of $(\bbR^d)^k$ onto the $kr$-dimensional space corresponding to the variables of the 
second class and let $\Pi_1$ 
be the complementary projection. By the 
choice of the $n_j$'s it follows that, if $f$ is a nonnegative function on $(\bbR^d)^k$,
we have
\begin{equation*}
\int_{(K_0 )^r}f\big(\Pi_2 \circ \Psi^{''}(x_{d-r+1},\dots ,x_d )\big)\,d\lambda_{d-r+1}(x_{d-r+1})\cdots d\lambda_d (x_d )\leq
\int_{\Pi_2 \big((\bbR^d)^k\big)}f\,dm_{kr}
\end{equation*}
(to see this, write $(x_d ,\dots ,x_{d-r+1})=(b_{1,1},\dots ,b_{r,k})$ and observe that the matrix of the map
$$
(b_{1,1},\dots ,b_{r,k})\mapsto \Pi_2 \circ \Psi^{''}(x_{d-r+1},\dots ,x_d )
$$
is lower triangular with $1$'s on the diagonal).
Since 
\begin{equation*}
\int_{(K_0 )^{qk}}f\big(  \Psi^{'}(x_{1},\dots ,x_{d-r} )\big)\,d\lambda_{1}(x_{1})\cdots d\lambda_{d-r} (x_{d-r} )=
\int_{\Pi_1 \big([-1/2,1/2]^{dk}\big)}f\,dm_{k(d-r)}
\end{equation*}
it then follows that 
\begin{equation*}
\int_{(K_0 )^{d}}f\big(  \Psi^{'}(x_{1},\dots ,x_{d-r})+
\Psi^{''}(x_{d-r+1},\dots ,x_d )\big)
\,d\lambda_{1}(x_{1})\cdots d\lambda_{d} (x_{d} )\leq \int_{(\bbR^d)^k}f\,dm_{dk},
\end{equation*}
giving \eqref{ineq} as desired.

The main result of this section is the following theorem.
\begin{theorem}\label{thorem1}
Suppose $K$ is a $k$-dimensional $C^{(1)}$ surface in $\bbR^d$ which is nondegenerate in the sense that $m_{dk}\big(\Psi (K^d )\big)>0$. Then 
\eqref{est4} holds with $\alpha =k$ and so \eqref{est2} holds as well.
\end{theorem}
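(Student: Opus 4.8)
The plan is to establish \eqref{est4} with $\alpha=k$; as explained after \eqref{est3.5}, this immediately yields \eqref{est2}. It is convenient to recast \eqref{est4} in the homogeneous form
\[
m_{dk}\big(\Psi(K^d)\big)\,m_d(F)^{\,d-k}\leq C\,m_d(F+K)^{\,d},
\]
to hold for all Borel $F\subset\bbR^d$. Since nondegeneracy gives $m_{dk}(\Psi(K^d))>0$, this form is equivalent to \eqref{est4} with $\alpha=k$ up to the value of $C(K)$, and it has the advantage of displaying the mechanism: the map $\Psi$ packs the $d$ factors of $F+K$ into $dk$ output coordinates, and the exponent $1-k/d$ is exactly the resulting dimensional deficit. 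We may assume $F$ is bounded with $m_d(F)>0$, the remaining cases being trivial or following by approximation.

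I would prove the homogeneous inequality by a coarea/autocorrelation argument. Writing $A=F+K$ and letting $J$ denote the (constant) Jacobian factor of the linear map $\Psi\colon(\bbR^d)^d\to(\bbR^d)^k$, which is onto by nondegeneracy, set
\[
H(y)=\tfrac1J\,\mathcal H^{d(d-k)}\big(A^d\cap\Psi^{-1}(y)\big),\qquad y\in(\bbR^d)^k .
\]
The coarea formula gives the clean identity $\int_{(\bbR^d)^k}H\,dm_{dk}=m_{d^2}(A^d)=m_d(A)^d$, which plays the role of the total-mass side. The inequality then reduces to the pointwise lower bound $H(y)\gtrsim m_d(F)^{\,d-k}$ for $y\in\Psi(K^d)$, since integrating this over $\Psi(K^d)$ produces the factor $m_{dk}(\Psi(K^d))$ on the left. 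The model case $k=1$, $d=2$ (Theorem B of \cite{O1}, i.e. \eqref{est6}) is instructive: there $H=\mathbf 1_A\star\mathbf 1_A$, and for $z=\kappa_1-\kappa_2\in K-K=\Psi(K^2)$ one has $\kappa_2+F\subset A\cap(A-z)$, whence $H(z)\geq m_d(F)$ and the deficit $d-k=1$ appears as a single free copy of $F$.

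The heart of the matter is the lower bound on $H$ for general $\Psi$: for $y=\Psi(\kappa_1,\dots,\kappa_d)$ with $\kappa_i\in K$ one must exhibit a $d(d-k)$-dimensional family of solutions $a=(a_1,\dots,a_d)\in A^d$ of $\Psi(a)=y$ whose $\mathcal H^{d(d-k)}$-measure is $\gtrsim m_d(F)^{\,d-k}$. When $d=2k$ (so $q=2$) the chains $\psi_j'$ are simple differences and one can take the $k$ free directions to be diagonal copies of $F$, exactly as in the model case; but for $q>2$ the naive choice of free directions leaves $F$ (it forces sum conditions such as $s+w\in F$), so a softer argument is needed. Here I would run the inflation-map method of \cite{O1} (Proposition 4) attached to the specific map $\Psi$: the split of the variables into the first and second classes and the chain structure of each $\psi_j'$ in \eqref{defpsi} allow one to solve the equations $\psi_j(a)=y_j$ block by block and to absorb the coupling through $\Psi''$ using \eqref{ineq}, with the measures $\lambda_1,\dots,\lambda_d$ recording the correct block parametrization. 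I expect the bulk of the work to sit in this step.

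The main obstacle, and the genuinely new point beyond \cite{O1}, is the passage from the flat model $K_0$ to a curved $C^{(1)}$ surface $K$, that is, the proof of Lemma \ref{lemma1}. For $K_0$ the measures $\lambda_j$ are Lebesgue measure on coordinate faces and \eqref{ineq} holds with density bounded by $1$; for a curved $K$ one can no longer use flat measures and must instead cover $K$ by finitely many pieces on which it is a $C^{(1)}$ graph over a coordinate $k$-plane, transport Lebesgue measure through these graph maps, and verify that the $\Psi$-pushforward of the resulting product measure is still absolutely continuous with bounded density and \emph{positive} total mass. The positivity is precisely what the nondegeneracy hypothesis $m_{dk}(\Psi(K^d))>0$ supplies, via a uniform lower bound for the Jacobian of $\Psi$ restricted to $K^d$ on a set of positive measure; controlling this Jacobian from below while keeping the density bounded above, and doing so compatibly with the rigid combinatorial structure of $\Psi$ to which the whole method is tied, is the delicate part of the argument.
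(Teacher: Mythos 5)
Your argument hinges on the reduction to the pointwise fiber bound $H(y)\gtrsim m_d(F)^{\,d-k}$ for $y\in\Psi(K^d)$, and that reduction is to a false statement: the bound fails for general Borel $F$ as soon as some component $\psi_j$ of $\Psi$ contains three or more terms, which for the paper's $\Psi$ is every case other than the pure-difference ones ($d=2k$ or $k=d-1$). Take $k=1$, $d=3$, so that $\Psi(x_1,x_2,x_3)=x_3+x_2-x_1$, let $K$ be a segment of the curve $(t,t^2,t^3)$ (so $m_3\big(\Psi(K^3)\big)=m_3(K+K-K)>0$), and let
\[
F=\bigcup_{i=1}^N B(R\,4^i e_1,\epsilon),
\]
with $R$ much larger than $\operatorname{diam}(K)$. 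If $a_i=u_i+\kappa_i'$ with $u_i\in F$, $\kappa_i'\in K$, and if $a_3+a_2-a_1=\kappa_3+\kappa_2-\kappa_1$ for some $\kappa_i\in K$, then $u_3+u_2-u_1$ has norm at most $3\operatorname{diam}(K)$; but $u_3+u_2-u_1=R(4^{j_3}+4^{j_2}-4^{j_1})e_1+O(\epsilon)$, and since $4^a+4^b=4^c$ has no solutions in positive integers this vector has norm at least $R-3\epsilon$. Hence $A^3\cap\Psi^{-1}(y)$ is \emph{empty} for every $y\in\Psi(K^3)$: your $H$ vanishes identically on $\Psi(K^3)$ while $m_3(F)^{d-k}=(cN\epsilon^3)^2>0$. (The theorem itself survives this example --- the $N$ tubes comprising $F+K$ are disjoint, so $m_3(F+K)\approx N\epsilon^2\geq (N\epsilon^3)^{2/3}$ --- it is only your reduction that fails.) You do sense the problem (``it forces sum conditions such as $s+w\in F$''), but no ``softer'' pointwise bound on $H$ can exist, since $H$ can vanish identically on $\Psi(K^d)$; and deferring the repair to the inflation-map method of \cite{O1} is not available either: Proposition 4 of \cite{O1} covers only $k=d-1$, and the paper points out that the map $\Psi_0$ of \cite{O1} cannot work for general $k$ --- producing an argument adapted to the new $\Psi$ is precisely the content of this theorem.

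The paper's proof is built to survive exactly such examples, and is structurally different. Lemma \ref{lemma1} never bounds fibers pointwise: with $E=K+F$ (the unknown sumset is fed into the inequality, not decomposed geometrically), it sets $\Omega=\int\prod_j\lambda_j\ast\chi_E\,dm_d$ and $\alpha=\Omega/m_d(E)$, then runs a refinement iteration in the style of \cite{C2}: one passes repeatedly to subsets $E_j\subset E$ on which the averages \eqref{ineq5} are $\gtrsim\alpha$, assembles the chains $\psi_j'$, $\psi_j''$ of \eqref{defpsi} one variable at a time inside expectations, arrives at $\bbE\big(\int\prod_{j}\chi_E\big(y_j+\psi_j(x)\big)\,d\lambda(x)\big)\gtrsim\alpha^{d-k}$, and only then applies the hypothesis \eqref{ineq2}. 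The sum conditions you cannot enforce pointwise are thus replaced by averaged statements over large subsets of $E$; that is the tool your proposal lacks. Finally, you have the difficulty distribution backwards: the flat-to-curved passage you call ``the main obstacle'' is Lemma \ref{lemma2} (not Lemma \ref{lemma1}), and the paper dispatches it in a few lines via Sard's theorem and the inverse function theorem; essentially all of the work lies in the iteration just described.
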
 
\noindent The proof is an immediate consequence of the next two lemmas.
\begin{lemma}\label{lemma1}
Suppose $K\subset\bbR^d$ carries probability measures $\lambda_1 ,\dots \lambda_d$ such that 
\begin{equation}\label{ineq2}
\int_{K^d}f\big(\Psi (x_1 ,\dots ,x_d )\big)\,d\lambda_1 (x_1 )\cdots d\lambda_d (x_d)\lesssim \int_{(\bbR^d)^k}f\, dm_{dk}
 \end{equation}
 for nonnegative $f$ on $\bbR^d$. Then the estimate
 \begin{equation}\label{ineq3}
 \int_{\bbR^d}\prod_{j=1}^d \int_K \chi_E (y+x_j )\, d\lambda_j (x_j )\ dm_d (y) \lesssim m_d (E)^{d/(d-k)}
 \end{equation}
 holds for Borel $E\subset\bbR^d$.
\end{lemma}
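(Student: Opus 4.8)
The plan is to prove \eqref{ineq3} by Fubini and then to feed the resulting autocorrelation of $\chi_E$ into the hypothesis \eqref{ineq2}. Writing $g_j(y)=\int_K\chi_E(y+x_j)\,d\lambda_j(x_j)$, so that the left side of \eqref{ineq3} is $\int_{\bbR^d}\prod_{j=1}^d g_j(y)\,dy$ with each $g_j$ obeying $0\le g_j\le1$ and $\int g_j=m_d(E)$, I would first interchange the order of integration to get
\[
\int_{\bbR^d}\prod_{j=1}^d g_j(y)\,dy=\int_{K^d}B(x_1,\dots,x_d)\,d\lambda_1(x_1)\cdots d\lambda_d(x_d),
\]
where $B(x_1,\dots,x_d)=m_d\big(\bigcap_{j=1}^d(E-x_j)\big)$ is the $d$-fold autocorrelation of $\chi_E$. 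Two features of $B$ drive everything: it satisfies the trivial bound $0\le B\le m_d(E)$, and it is translation invariant, i.e. it depends only on the differences $x_i-x_j$. The whole problem is to use \eqref{ineq2}, which controls the single family of combinations $\Psi(x)=(\psi_1(x),\dots,\psi_k(x))$, to beat the trivial estimate $\int_{K^d}B\lesssim m_d(E)$ by the factor $m_d(E)^{k/(d-k)}$.

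The mechanism is cleanest in the model case where $\Psi$ is built from blocks of size two, i.e. $d=2k$, $q=2$, $r=0$, so that $\psi_i'(x)=x_{2i}-x_{2i-1}$ and the blocks $\{2i-1,2i\}$ are disjoint. Here I would first note that \eqref{ineq2} marginalizes: choosing $f$ to be the product of a function of $w_i$ and the indicator of a fixed large box in the remaining variables (legitimate since $\Psi(K^d)$ is bounded) shows that for each block the push-forward of $\lambda_{2i-1}\otimes\lambda_{2i}$ under $x_{2i}-x_{2i-1}$ has bounded density. Next I would apply H\"older in $y$ to $\prod_j g_j$, splitting it across the $k$ blocks with exponent $k$ on each factor; since $0\le g_j\le1$ one has $(g_{2i-1}g_{2i})^k\le g_{2i-1}g_{2i}$, and the elementary identity
\[
\int_{\bbR^d}g_{2i-1}(y)\,g_{2i}(y)\,dy=\int_{K^2}A(x_{2i}-x_{2i-1})\,d\lambda_{2i-1}\,d\lambda_{2i},\qquad A(w)=m_d\big(E\cap(E-w)\big),
\]
together with the block push-forward bound and $\int_{\bbR^d}A=m_d(E)^2$, gives $\|g_{2i-1}g_{2i}\|_k^k\lesssim m_d(E)^2$. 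Multiplying the $k$ resulting factors produces exactly $m_d(E)^2=m_d(E)^{d/(d-k)}$.

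For general $d,k$ (still with $r=0$, so $d=qk$ and the blocks are disjoint) I would run the same scheme: marginalize \eqref{ineq2} to each block, apply H\"older in $y$ across the $k$ blocks with exponent $k$, and use $0\le g_j\le1$ to reduce the $i$-th H\"older factor to $\int_{\bbR^d}\prod_{l\in\text{block }i}g_l\,dy$. Collapsing a block back through Fubini shows that the estimate needed per block is the one-output instance of the lemma,
\[
\int_{\bbR^d}\prod_{l=1}^q g_l(y)\,dy\lesssim m_d(E)^{q/(q-1)},
\]
under the hypothesis that the single combination $\psi'=x_q+\cdots+x_2-x_1$ of \eqref{defpsi} pushes $\lambda_1\otimes\cdots\otimes\lambda_q$ to a measure of bounded density. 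The exponents match because $q/(q-1)=d/(d-k)$ and the $k$ block factors recombine correctly. When $r\neq0$ the second-class variables are shared among several of the $\psi_i''$, so the blocks overlap; I would handle this by distributing the shared variables among the H\"older factors, but the arithmetic of exponents is unchanged.

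The hard part is the one-output estimate displayed above for $q\ge3$, and here the translation invariance of $B$ is the central obstacle. For $q=2$ the combination $x_2-x_1$ is itself a difference, $B$ is literally a function of $\psi'$, and \eqref{ineq2} applies directly; but for $q\ge3$ the autocorrelation depends on $q-1$ independent differences, whereas $\psi'=x_q+\cdots+x_2-x_1$ has coefficient sum $q-2\neq0$, is not translation invariant, and records only one $\bbR^d$ of information. Consequently $B$ cannot be dominated pointwise by an integrable function of $\Psi(x)$, and the naive splittings either fail to engage the hypothesis (which controls the \emph{sum} $\psi'$, not the differences on which $B$ depends) or throw a power strictly less than $1$ onto $A$, whose integral is then governed by $m_d(E-E)$ rather than $m_d(E)$. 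Reconciling the $(q-1)$-fold difference structure of the autocorrelation with the single combination recorded by \eqref{ineq2} is precisely the convolution estimate at the heart of \cite{O1}, and I expect to import its method — an iterated Cauchy--Schwarz, equivalently the finite-to-one change of variables $x\mapsto\Psi(x)$ on $K^d$ supplied by nondegeneracy — to finish.
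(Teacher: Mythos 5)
Your argument is complete only in the special case $q=2$, $r=0$ (that is, $d=2k$), where each component $\psi_j(x)=x_{2j}-x_{2j-1}$ is an honest difference, so the autocorrelation of $\chi_E$ is a function of $\Psi(x)$ and the hypothesis \eqref{ineq2} can be engaged after marginalizing to blocks; that part of your argument is correct, and is more elementary than what the paper does. But in the general case you name the obstruction precisely --- the autocorrelation $B$ depends only on the differences $x_i-x_j$, while $\psi_j'$ has coefficient sum $q-2\neq 0$ --- and then leave the resulting ``one-output estimate'' for $q\ge 3$ to ``the method of \cite{O1}'' without executing it. That estimate is not a technical remainder: it is the $k=1$ instance of the lemma itself, a statement of exactly the same kind and difficulty as the one being proved, so your reduction trades the lemma for an unproved case of itself. (In addition, your treatment of $r\neq 0$ --- ``distribute the shared variables among the H\"older factors'' --- is not carried out; since the second-class variables $x_{d-r+1},\dots ,x_d$ appear in several of the $\psi_j''$ simultaneously, the disjoint-block H\"older splitting no longer applies as stated.)

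The paper's proof never passes through the autocorrelation and is precisely the iteration you are missing. Setting $\alpha=\Omega/m_d(E)$, it runs the refinement argument of Christ \cite{C2} (see also \cite{C}): one constructs subsets $E_1,\dots ,E_d$ of $E$ for which the pointwise lower bound \eqref{ineq5.5} holds, and then applies \eqref{ineq5.5} once per variable, in expectation over random translates $y_p=-x_p$, each application inserting one new variable into the partial sums at the cost of one factor of $\alpha$. After $r$ steps one has the second-class sums $x_d+\cdots +x_{d-p+1}$, and after $(q-1)k$ further steps the partial sums are exactly the combinations $\psi_j(x_1,\dots ,x_d)$, giving
\begin{equation*}
\bbE \Big(\int \prod_{j=1}^k \chi_E\big(y_j+\psi_j(x_1,\dots ,x_d)\big)\,d\lambda(x)\Big)\gtrsim \alpha^{r+(q-1)k}=\alpha^{d-k};
\end{equation*}
applying \eqref{ineq2} with $f$ the indicator of $\prod_{j=1}^k(E-y_j)$ then yields $\alpha^{d-k}\lesssim m_d(E)^k$, which is \eqref{ineq3}. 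The non-translation-invariance of $\psi_j'$ is never an issue in this scheme, because the sums are assembled one term at a time from the refinement inequality rather than dominated by the hypothesis in a single stroke. To finish along your lines you would have to carry out this iteration (or the iterated Cauchy--Schwarz of \cite{O1}) for your one-output estimate; as written, your proposal proves the lemma only when $d=2k$.
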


\begin{lemma}\label{lemma2}
If $K$ is as in Theorem \ref{thorem1} then $K$ carries probability measures $\lambda_j$ such that \eqref{ineq2} holds.
\end{lemma}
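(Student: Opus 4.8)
The plan is to realize the $\lambda_j$ as smooth densities carried by a single small coordinate patch of $K$, chosen so that $\Psi$ restricts to a diffeomorphism on the corresponding patch of $K^d$, and then to read off \eqref{ineq2} from the change-of-variables formula. First I would cover $K$ by countably many $C^{(1)}$ charts $\phi_\iota : U_\iota \to K$, with $U_\iota \subset \bbR^k$ open and each $\phi_\iota$ a $C^{(1)}$ parametrization onto its (relatively open) image. Since $\Psi(K^d)$ is the countable union of the sets $\Psi\big(\phi_{\iota_1}(U_{\iota_1}) \times \cdots \times \phi_{\iota_d}(U_{\iota_d})\big)$ and has positive $m_{dk}$-measure, at least one such $d$-fold product of chart images has image of positive measure under $\Psi$. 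Relabelling, I write $\phi_1, \dots, \phi_d$ for the charts involved and define the $C^{(1)}$ map
$$
F : U_1 \times \cdots \times U_d \to (\bbR^d)^k, \qquad F(u_1, \dots, u_d) = \Psi\big(\phi_1(u_1), \dots, \phi_d(u_d)\big).
$$
Both domain and target have dimension $dk$, and by construction the image $F(U_1 \times \cdots \times U_d)$ has positive Lebesgue measure.

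The key step is to produce a point at which $F$ is a local diffeomorphism. Because $F$ is a $C^{(1)}$ map between open subsets of spaces of the same dimension $dk$, the image under $F$ of its critical set $\{\det DF = 0\}$ has $m_{dk}$-measure zero; this is the equidimensional case of Sard's theorem, which holds already for $C^{(1)}$ maps and follows from an elementary covering argument (near a critical point $DF$ has rank $<dk$, so $F$ maps small cubes into thin slabs). Since $F$ maps its full domain onto a set of positive measure, there must be a regular point $u^0 = (u_1^0, \dots, u_d^0)$ with $\det DF(u^0) \neq 0$. The inverse function theorem then provides a neighborhood of $u^0$ on which $F$ is a diffeomorphism; shrinking to a product neighborhood $V = V_1 \times \cdots \times V_d$ on which $F$ stays injective and $|\det DF| \geq c$ for some $c > 0$ (possible by continuity of $DF$) sets up a clean change of variables.

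Finally, for each $j$ I choose a smooth probability density $\rho_j$ supported in $V_j$ and set $\lambda_j = (\phi_j)_*(\rho_j\, du_j)$, a probability measure carried by $K$. Then for nonnegative $f$ on $(\bbR^d)^k$,
$$
\int_{K^d} f\big(\Psi(x_1, \dots, x_d)\big)\, d\lambda_1(x_1) \cdots d\lambda_d(x_d) = \int_V f\big(F(u)\big) \prod_{j=1}^d \rho_j(u_j)\, du,
$$
where $u = (u_1, \dots, u_d)$ and $du$ is Lebesgue measure on $\bbR^{dk}$. Substituting $w = F(u)$, valid since $F$ is a diffeomorphism on $V$, rewrites the right-hand side as $\int_{F(V)} f(w)\, g(w)\, dw$ with density $g(w) = \prod_{j} \rho_j\big((F^{-1}w)_j\big) \big/ \big|\det DF(F^{-1}w)\big|$. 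Since the $\rho_j$ are bounded and $|\det DF| \geq c$ on $V$, the density $g$ is bounded above, and hence the integral is $\lesssim \int_{(\bbR^d)^k} f\, dm_{dk}$, which is exactly \eqref{ineq2}. I expect the only delicate point to be the justification that the critical values of $F$ form a null set in the merely $C^{(1)}$ category; once a single regular point is in hand, the remaining construction of the measures and the change of variables are routine.
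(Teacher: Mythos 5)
Your proposal is correct and follows essentially the same route as the paper: compose $\Psi$ with $C^{(1)}$ parametrizations of $K$, use the equidimensional ($C^1$) Sard theorem plus positivity of the image's measure to find a regular point, apply the inverse function theorem to get a product neighborhood where the map is injective with Jacobian bounded below, and define the $\lambda_j$ as pushforwards of (normalized) densities so that \eqref{ineq2} follows by change of variables. The only difference is cosmetic: you cover $K$ by countably many charts before selecting one product patch (a point the paper glosses over by working with a single parametrization), and you use smooth densities where the paper uses normalized Lebesgue measure on the sets $O_j$.
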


\noindent To deduce \eqref{est4} with $\alpha =k$ from the lemmas, fix $F$ and take $E=K+F$ in Lemma \ref{lemma1}.

Here is the proof of Lemma \ref{lemma1}.
\begin{proof} The proof is based on ideas from \cite{C2} (see also \cite{C}).
Let $\Omega$ be defined by 
\begin{equation*}
\Omega = \int_{\bbR^d}\prod_{j=1}^d \int_K \chi_E (y+x_j )\, d\lambda_j (x_j )\ dm_d (y)
\end{equation*}
and put $\alpha =\Omega /m_d (E)$. It is enough to prove
\begin{equation}\label{ineq4}
\alpha ^{d-k} \lesssim m_d (E)^k .
\end{equation}
Since
\begin{equation*}
\Omega = \int_E\int_K \prod_{j=2}^d \Big(\int_K \chi_E (y+x_j -x_1 )\, d\lambda_j (x_j )\Big)\,
d\lambda_1 (x_1 )\ dm_d (y),
\end{equation*}
if $E_1$ is the set of $y\in E$ for which  
\begin{equation*}
\int_K \prod_{j=2}^d \Big(\int_K \chi_E (y+x_j -x_1 )\, d\lambda_j (x_j )\Big)\,d\lambda_1 (x_1 )\geq \frac{\alpha}{2},
\end{equation*}
then 
\begin{equation*}
 \int_{\bbR^d}\int_K \chi_{E_1} (y+x_1 )\, d\lambda_1 (x_1 )\prod_{j=2}^d \int_K \chi_E (y+x_j )\, d\lambda_j (x_j )\ dm_d (y)=
\end{equation*}
\begin{equation*}
\int_{E_1}\int_K \prod_{j=2}^d \Big(\int_K \chi_E (y+x_j -x_1 )\, d\lambda_j (x_j )\Big)\,
d\lambda_1 (x_1 )\ dm_d (y) \geq \frac{\Omega}{2}.
\end{equation*}
Continuing inductively, for $j=2,\dots d$ we produce nonempty sets $E_j \subset E$ such that 
\begin{equation}\label{ineq5}
\int_K  \Big(\prod_{p<j}\int_K \chi_{E_p}(y+x_j -x_p )\, d\lambda_p (x_p )  \prod_{q>j}\int_K \chi_{E}(y+x_j -x_q )\, d\lambda_q (x_q )\Big) d\lambda_j (x_j )
\geq \frac{\alpha}{2^j}
\end{equation}
for $y\in E_j$. 

We will make repeated use of the following observation (a consequence of \eqref{ineq5}): if $j=2,\dots ,d$ then 
\begin{equation}\label{ineq5.5}
\int_K \Big(\int_K \prod_{p=1}^{j-1}\chi_{E_{p}}(y +x_j -x_{p})\,d\lambda_p  (x_p )\Big)
d\lambda_{j}(x_{j})\gtrsim\alpha\chi_{E_j}(y).
\end{equation}

Here are some notational conventions which we will use in the remainder of the proof. The symbol $\bbE$ will denote 
an expectation and any subscripted $y$ will denote a random vector in $\bbR^d$. The underlying probability space will be the product
of a large number of copies of $K$ with probability measure a product of measures $\lambda_j$. Thus, with $y_p =-x_{p}$, 
we rewrite \eqref{ineq5.5} as
\begin{equation}\label{ineq5.6}
\bbE \Big(\int_K  
\prod_{p=1}^{j-1}\chi_{E_{p}}(y+y_p +x_j )
\,d\lambda_j (x_j )\Big)\gtrsim \alpha \,\chi_{E_j} (y).
\end{equation}
In particular, starting with $y\in E_d$ and then writing $y^1_p =y+y_p$, we have 
\begin{equation}\label{ineq5.75}
\bbE \Big(\int_K  
\prod_{p=1}^{d-1}\chi_{E_{p}}(y^1_p +x_d )
\,d\lambda_d (x_d )\Big)\gtrsim \alpha .
\end{equation}
Now, since 
\begin{equation*}
\bbE_{\{y^2_p\}} \Big(\int_K  
\prod_{p=1}^{d-2}\chi_{E_{p}}(y^1_{d-1}+y^2_{p}+x_d +x_{d-1} )
\,d\lambda_{d-1} (x_{d-1} )\Big)
\gtrsim
\alpha\,\chi_{E_{d-1}}(y^1_{d-1} +x_d )
\end{equation*}
by \eqref{ineq5.6}, we see, upon replacing $y^1_{d-1}+y^2_{p}$ by $y^2_{p}$,
that \eqref{ineq5.75} yields
\begin{equation}\label{ineq5.8}
\bbE \Big(\int_K 
\int_K 
\prod_{p_1 =1}^{d-1}\chi_{E_{p_1}}(y^1_{p_1} +x_d )
\prod_{p_2 =1}^{d-2}\chi_{E_{p_2}}(y^2_{p_2}+x_d +x_{d-1} )
\,d\lambda_d  (x_d )\,d\lambda_{d-1}(x_{d-1})\Big)\gtrsim \alpha^2 .
\end{equation}
Next use \eqref{ineq5.6} again to write
\begin{equation*}
\bbE_{\{y_p\}} \Big(\int_K  
\prod_{p=1}^{d-3}\chi_{E_{p}}(y_{p}+y^2_{d-2}+x_d +x_{d-1} +x_{d-2})
\,d\lambda_{d-2} (x_{d-2} )\Big)
\gtrsim
\alpha\,\chi_{E_{d-2}}(y^2_{d-2} +x_d +x_{d-1})
\end{equation*}
and apply this to \eqref{ineq5.8} to obtain
\begin{equation*}
\bbE \Big(\int_K   \int_K  \int_K
\prod_{p=1}^{3}\ 
\Big[\prod_{q=1}^{d-p}
\chi_{E_{q}}(y_{p,q}+x_d +\cdots  +x_{d-p+1})\Big]
d\lambda_d  (x_d )\, d\lambda_{d-1} (x_{d-1} )\, d\lambda_{d-2} (x_{d-2} )
\Big)\gtrsim \alpha^{3}. 
\end{equation*}
After $r-3$ more steps (where we recall that $r$ is defined by $d=qk+r$), we have
\begin{equation}\label{ineq6}
\bbE \Big(\int_K  \cdots \int_K  
\prod_{p=1}^{r}\ 
\Big[\prod_{q=1}^{d-p}
\chi_{E_{q}}(y_{p,q}+x_d +\cdots  +x_{d-p+1})\Big]
d\lambda_d  (x_d )\cdots d\lambda_{d-r+1}(x_{d-r+1})\Big)\gtrsim \alpha^{r}   .
\end{equation}

We make another notational convention: $\int \cdots d\lambda (x)$ will stand for an integral over a product of copies of
$K$ with respect to a product of measures $\lambda_j$ where the measures occurring in the product correspond to the 
variables $x_j$ appearing in the integrand. In particular we rewrite \eqref{ineq6} as 
\begin{equation}\label{ineq6.5}
\bbE \Big(\int
\prod_{p=1}^{r}
\Big[\prod_{q=1}^{d-p}
\chi_{E_{q}}(y_{p,q}+x_d +\cdots  +x_{d-p+1})\Big]
d\lambda (x)\Big)\gtrsim \alpha^{r}   .
\end{equation}

Now fix $j\in\{1 ,\dots ,k\}$. With $n_j$ as specified after \eqref{K}, \eqref{ineq5.6} gives 

\begin{equation}\label{ineq7}
\bbE\Big(\int_K \chi_{E_{jq-1}}(y_j +x_d +\cdots +x_{d-n_j}+x_{jq})\,d\lambda_{jq} (x_{jq} )
\Big)\gtrsim \alpha\, \chi_{E_{jq}}(y_{n_j+1,jq}\,+x_d +\cdots +x_{d-n_j })
\end{equation}
for some random vector $y_j$.
Since $n_j <r$ implies $jq\leq kq=d-r<d-n_j$, the $k$ estimates \eqref{ineq7} can be applied in \eqref{ineq6.5} to give 
\begin{equation}\label{ineq8}
\bbE \Big(\int
\prod_{j=1}^k
\chi_{E_{jq-1}}(y_j +x_d +\cdots +x_{d-n_j}+x_{jq})\,
d\lambda (x)\Big)\gtrsim \alpha^{r+k}   .
\end{equation}
Analogous to \eqref{ineq7} there are, for $1\leq j\leq k$, the estimates 
\begin{equation*}
\bbE\Big(\int_K \chi_{E_{jq-2}}(y_j^{'} +x_d +\cdots +x_{d-n_j}+x_{jq}+x_{jq-1})\,d\lambda_{jq-1} (x_{jq-1} )
\Big)
\end{equation*}
\begin{equation*}
\gtrsim \alpha\, \chi_{E_{jq-1}}(y_{j}+x_d +\cdots +x_{d-n_j }+x_{jq}).
\end{equation*}
Using these estimates in \eqref{ineq8} gives
\begin{equation*}
\bbE \Big(\int
\prod_{j=1}^k 
\chi_{E_{jq-2}}(y_j^{'} +x_d +\cdots +x_{d-n_j}+x_{jq}+x_{jq-1})\,
d\lambda (x)\Big)\gtrsim \alpha^{r+2k}   .
\end{equation*}
Doing this $q-4$ more times we obtain
\begin{equation}\label{ineq9}
\bbE \Big(\int
\prod_{j=1}^k 
\chi_{E_{(j-1)q+2}}(y_j +x_d +\cdots +x_{d-n_j}+x_{jq}+x_{jq-1}+\cdots +x_{(j-1)q+3})\,
d\lambda (x)\Big)\gtrsim \alpha^{r+(q-2)k}   .
\end{equation}
Recall from \eqref{ineq5.5} that 
\begin{equation*}
\int_K \int_K \chi_{E_{p-1}}(y +x_p -x_{p-1})\,d\lambda_p  (x_p )
\,d\lambda_{p-1}(x_{p-1}) \gtrsim\alpha \, \chi_{E_p}(y)
\end{equation*}
and apply this with $p=(j-1)q+2$ to obtain the estimates 
\begin{equation*}
\int_K \int_K \chi_{E_{(j-1)q+1}}(y_j +x_d +\cdots +x_{d-n_j}+x_{jq}+\cdots +x_{(j-1)q+2}-x_{(j-1)q+1})\cdot
\end{equation*}
\begin{equation*}
d\lambda_{(j-1)q+2} (x_{(j-1)q+2}) \,d\lambda_{(j-1)q+1}(x_{(j-1)q+1})
\end{equation*}
\begin{equation*}
\gtrsim \alpha \chi_{E_{(j-1)q+2}}(y_j +x_d +\cdots +x_{d-n_j}+x_{jq}+\cdots +x_{(j-1)q+3}).
\end{equation*}
Using these in \eqref{ineq9} and recalling the definition of $\psi_j$ gives 
\begin{equation*}
\bbE \Big(\int
\prod_{j=1}^k 
\chi_{E_{(j-1)q+1}}\big(y_j +\psi_j (x_1 ,\dots ,x_d )\big)\,
d\lambda (x)\Big)\gtrsim \alpha^{r+(q-1)k}   
\end{equation*}
and so, since $E_i \subset E$ and $r+(q-1)k=d-k$, 
\begin{equation*}
\bbE \Big(\int
\prod_{j=1}^k 
\chi_{E}\big(y_j +\psi_j (x_1 ,\dots ,x_d )\big)\,
d\lambda (x)\Big)\gtrsim \alpha^{d-k}.   
\end{equation*}
Now applying the hypothesis \eqref{ineq2} with $f$ the indicator function of 
$$
\prod_{j=1}^k (E-y_j )\subset
(\bbR^d )^k
$$
yields \eqref{ineq4}, completing the proof of Lemma \ref{lemma1}.

\end{proof}

The proof of Lemma \ref{lemma2} is the same as the analogous part of the proof of
Theorem 3 in \cite{O2}, but for the sake of completeness (and since it is short), we will 
sketch the argument: parametrize $K$ by a $C^{(1)}$ map $\phi : (0,1)^k \rightarrow \bbR^d$. 
Let $\Phi : \big( (0,1)^k \big)^d \rightarrow (\bbR^d )^k$ be defined by
\begin{equation*}
\Phi (x_1 ,\dots ,x_d )=
\Psi \big(\phi (x_1 ),\dots ,\phi (x_d)
\big).
\end{equation*}
The hypothesis is that $\Phi \big( \big( (0,1)^k \big)^d \big)$ has positive Lebesgue measure in $ (\bbR^d )^k$.
It follows from Sard's theorem, continuity, and the inverse function theorem, that there are $\delta >0$ 
and nonempty open sets $O_j \subset (0,1)^k$ such that 
\begin{equation*}
|\det \Phi ^{'} (x_1 ,\dots ,x_d )|\geq \delta
\end{equation*}
on $\prod_{j=1}^d O_j$ and such that $\Phi$ is one-to-one on $\prod_{j=1}^d O_j$. If the measures $\lambda_j$ are defined by
\begin{equation*}
\int_K g\, d\lambda_j =
\frac{1}{m_k (O_j )}\int_{O_j} g\big(\phi (x)\big)\, dm_k (x),
\end{equation*}
then it is easy to see that the $\lambda_j$'s satisfy the conclusion of Lemma \ref{lemma2}.

\section{Hausdorff dimension}\label{Hausdorff}

One approach to studying $\dim_h (E+K)$ begins with the fact that if $K$ and $E$ are, respectively, the supports of measures 
$\lambda$ and $\mu$, then $K+E$ is the support of $\lambda\ast\mu$. An attempt to exploit this idea might start with
Frostman measures  on $K$ and $E$ and hope to say something useful about the energies of $\lambda\ast\mu$. In our 
context, with $K$ fixed and desiring to estimate $\dim_h (E+K)$ for general $E$, it seems necessary to require more than that
$\lambda$ be a Frostman measure for $K$. If we require much more, namely that $K$ be a Salem set, then it is easy 
to show that 
\begin{equation}\label{ineq9.5}
\dim_h (E+K)=\min\{\dim_h (E)+\dim_h (K), d\}.
\end{equation}

Recall that a set $K\subset\bbR^d$
satisfying $\dim_h (K)=\alpha$ is a Salem set if, for each $s<\alpha$, $K$ carries a probability measure $\lambda$ satisfying
\begin{equation}\label{ineq9.2}
|\widehat{\lambda}(\xi )|\lesssim |\xi |^{-s/2}.
\end{equation}
(Kahane's book \cite{JK} is a good source of information about Salem sets.)
Suppose that $K$ is a Salem set and also that $\dim_h (E)=\beta$.
Now $\dim_h (E)=\beta$ is equivalent to the statement that if $r<\beta$ then there
is a probability measure $\mu$ supported on $E$ such that 
\begin{equation}\label{ineq10}
\int_{\bbR^d}|\xi |^{r-d}|\widehat{\mu}(\xi )|^2 \, d\xi <\infty  .
\end{equation}
If \eqref{ineq9.2} holds, then \eqref{ineq10} gives
\begin{equation*}
\int_{\bbR^d}|\xi |^{r+s-d}|\widehat{\lambda\ast\mu}(\xi )|^2 \, d\xi <\infty .
\end{equation*}
Thus whenever $r<\beta$ and $s<\alpha$, $E+K$ carries a probability measure $\nu$ such that 
\begin{equation*}
\int_{\bbR^d}|\xi |^{r+s-d}|\widehat{\nu}(\xi )|^2 \, d\xi <\infty ,
\end{equation*}
and \eqref{ineq9.5} follows. Of course this argument shows that if $K$ carries a probability measure $\lambda$ satisfying
\eqref{ineq9.2} then 
\begin{equation}\label{ineq10.5}
\dim_h (E+K)\geq \min\{\dim_h (E)+s,d\}.
\end{equation}
Unfortunately,
the requirement that $K$ be a Salem set is stringent: for example, a $k$-surface in $\bbR^d$ can be a Salem set only if $k=d-1$. 

Here is a connection between the theory of $L^p \rightarrow L^q$ convolution estimates for nonnegative measures $\lambda$
and estimates for $\dim_h (E+K)$:
\begin{proposition}\label{prop1}
Suppose $\lambda$ is a probability measure on $K\subset\bbR^d$ which satisfies the convolution estimate 
\begin{equation}\label{ineq11}
\|\lambda\ast f \|_{L^q (\bbR^d )}\lesssim \| f\|_{L^p (\bbR^d )}
\end{equation}
for indices $1<p<q<\infty$. If $\dim_h (E)=\beta$ then $$\dim_h (K+E)\geq q^{'}\big[\frac{d}{p}-\frac{d}{q}+\frac{\beta}{p^{'}}\big].$$
\end{proposition}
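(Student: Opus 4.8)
The plan is to start from a Frostman measure. Fix $s<\beta$ and, by Frostman's lemma, choose a probability measure $\mu$ supported on $E$ with $\mu\big(B(x,r)\big)\lesssim r^s$ for all $x$ and all $r\le 1$; this is possible because $\dim_h(E)=\beta$. Set $\nu=\lambda\ast\mu$, a probability measure supported on $K+E$, so that it suffices to bound $\dim_h(\operatorname{supp}\nu)$ from below. Let $\phi_\delta=\big(|B(0,1)|\,\delta^d\big)^{-1}\chi_{B(0,\delta)}$, so that $\nu\ast\phi_\delta$ is a fixed constant multiple of $\delta^{-d}\,h_\delta$, where $h_\delta(x):=\nu\big(B(x,\delta)\big)$. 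The Frostman bound gives $\|\mu\ast\phi_\delta\|_\infty\lesssim\delta^{s-d}$ while $\|\mu\ast\phi_\delta\|_1\le 1$, so interpolation yields $\|\mu\ast\phi_\delta\|_p\lesssim\delta^{(s-d)/p'}$. Feeding $f=\mu\ast\phi_\delta$ into the hypothesis \eqref{ineq11} and using $\lambda\ast f=\nu\ast\phi_\delta$ produces the basic estimate $\|h_\delta\|_q\lesssim\delta^{d/p+s/p'}$.

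The heart of the matter, and the step I expect to be the main obstacle, is to convert this $L^q$ bound into a genuine \emph{Hausdorff} lower bound: a crude reading of $\|h_\delta\|_q$ controls only box counting, which estimates $\dim_h$ from the wrong side. The remedy is to combine two Hölder inequalities. First comes a single-scale estimate: if $c_1,\dots,c_M$ are $\delta$-separated points, then for $y\in B(c_i,\delta/2)$ one has $B(c_i,\delta)\subset B(y,2\delta)$ and hence $h_\delta(c_i)\le h_{2\delta}(y)$, so $h_\delta(c_i)$ is at most the average of $h_{2\delta}$ over $B(c_i,\delta/2)$. Summing over $i$ and applying Hölder over the disjoint union $\bigsqcup_i B(c_i,\delta/2)$, whose measure is $\sim M\delta^d$, gives
\begin{equation*}
\sum_i h_\delta(c_i)\lesssim \delta^{-d}\,\|h_{2\delta}\|_q\,(M\delta^d)^{1/q'}\lesssim M^{1/q'}\delta^{\Lambda},\qquad \Lambda:=\frac{d}{q'}+\frac{s-d}{p'}=\frac{d}{p}-\frac{d}{q}+\frac{s}{p'}.
\end{equation*}

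For the multi-scale step, take any cover of $\operatorname{supp}\nu$ by balls $B(c_i,r_i)$ and refine it at each dyadic scale $\delta_k=2^{-k}$ by choosing a maximal $\delta_k$-separated set of centers; the doubled balls still cover, and the per-scale counts $M_k=|\{i:r_i\sim\delta_k\}|$ only decrease. Since $\nu$ is a probability measure, $1\le\sum_k\sum_i h_{2\delta_k}(c_i)\lesssim\sum_k M_k^{1/q'}\delta_k^{\Lambda}$ by the single-scale estimate. A second Hölder application, now with exponents $q'$ and $q$ in the summation index $k$, gives
\begin{equation*}
1\lesssim\Big(\sum_k M_k\,\delta_k^{\gamma}\Big)^{1/q'}\Big(\sum_k \delta_k^{(\Lambda-\gamma/q')q}\Big)^{1/q},
\end{equation*}
whose second factor is a finite constant precisely when $\gamma<q'\Lambda$. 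For such $\gamma$ we conclude $\sum_i r_i^{\gamma}\sim\sum_k M_k\delta_k^{\gamma}\gtrsim 1$ for every cover, so $\mathcal{H}^{\gamma}(K+E)>0$ and therefore $\dim_h(K+E)\ge q'\Lambda$. Letting $s\uparrow\beta$ turns $q'\Lambda$ into $q'\big[\frac{d}{p}-\frac{d}{q}+\frac{\beta}{p'}\big]$, which is the claimed bound. I note that this route works for all $1<p<q<\infty$ and never passes through an $L^2$/energy formulation, which is what lets it avoid the restriction $q\ge 2$ that a direct Riesz-energy estimate would impose.
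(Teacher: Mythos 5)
Your argument is correct, and it takes a genuinely different route from the paper's at the decisive step. The openings agree in substance: the paper also starts from a Frostman measure $\mu$ on $E$ and also applies \eqref{ineq11} to a mollification of $\mu$ whose $L^p$ norm comes from interpolating between $L^1$ and $L^\infty$. The difference is which mollification is used and, above all, how $L^q$ information about $\nu=\lambda\ast\mu$ is converted into a \emph{Hausdorff} bound --- the step you rightly single out as the crux. The paper convolves once and for all with the Riesz-type kernel $K_\rho (x)=|x|^{-\rho}\chi_{B(0,R)}(x)$, obtaining $\nu\ast K_\rho\in L^q$ at the critical exponent via \eqref{ineq8.4}, and then invokes Lemma \ref{lemma3}, whose proof runs through Besov norms, the Besov capacity estimate $A_{t,p',q'}(K)\lesssim H_{d-tp'}(K)$ of \cite{S}, and the duality of $B^s_{p,q}$ with $B^{-s}_{p',q'}$ from \cite{BL}. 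You instead work scale by scale with $h_\delta (x)=\nu\big(B(x,\delta)\big)$ and turn the bound $\|h_\delta\|_q\lesssim\delta^{d/p+s/p'}$ directly into a lower bound on $\sum_i r_i^\gamma$ for an arbitrary cover, by one H\"older inequality in space (over disjoint balls around separated centers at each dyadic scale) and one H\"older inequality across scales. In effect you give an elementary, self-contained proof of exactly the case of Lemma \ref{lemma3} that the proposition needs, bypassing Besov space theory and capacities entirely; moreover, run on an arbitrary set of positive $\nu$-measure rather than on $\mathrm{supp}\,\nu$, your covering argument even recovers that lemma's absolute continuity conclusion. (Two small points to tidy: applying your single-scale bound to the refined centers means evaluating $h$ at radius $2\delta_k$ over points that are only $\delta_k$-separated, a harmless adjustment of constants; and if some $M_k=\infty$ the desired bound $\sum_i r_i^\gamma\gtrsim 1$ is trivial.) One caveat on your closing remark: avoiding $q\geq 2$ distinguishes your argument from a hypothetical Riesz-energy approach, not from the paper, since the Besov-duality route is likewise free of that restriction and covers all $1<p<q<\infty$. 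What the paper's formulation buys in exchange is a reusable intermediate statement: Lemma \ref{lemma3} (repeated from \cite{O.75}) is a weak converse of \eqref{ineq8.4} of independent interest, and the $K_\rho$ calculus set up around it is what the end of the section runs on for finite-type curves in $\bbR^3$.
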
 
\noindent Before giving the proof we make some comments:

(i) If \eqref{ineq11} holds, then convolution with the characteristic 
function of a small ball shows that $(1/p,1/q)$ must lie in the triangle 
$\Delta (\alpha ,d)$ with vertices $(0,0)$, $(1,1)$, and $\big(d/(2d-\alpha),(d-\alpha)/(2d-\alpha)\big)$,
where $\alpha$ is the Hausdorff dimension of the support of $\lambda$. If $\lambda$ is supported 
on a $k$-surface in $\bbR^d$ and $k(k+3)<2d$, then \eqref{ineq11} implies additional necessary conditions which 
keep $(1/p,1/q)$ bounded away from $\big(d/(2d-\alpha),(d-\alpha)/(2d-\alpha)\big)$. But if, for example, 
$k>d/2$, then there are $k$-surfaces such that \eqref{ineq11} holds for all $(1/p,1/q)$ in the interior of the triangle $\Delta (k,d)$.
See \cite{O.5}.

(ii) If  \eqref{ineq11} holds for all $(1/p,1/q)$ in the interior of the triangle $\Delta (k,d)$, then 
it follows from Proposition \ref{prop1} that the analog
\begin{equation}\label{ineq12}
\dim_h (E+K) \geq  \dim_h (K)+\dim_h (E) -\frac{\dim_h (K) \dim_h (E)}{d}
\end{equation} 
of \eqref{est2} holds.

(iii) If \eqref{ineq11} holds, then 
\begin{equation*}
m_d (F)=\int_{\bbR^d} \lambda\ast\chi_F \, dm_d \leq m_d (F+K)^{1/q'}\|\lambda\ast\chi_F \|_{L^q (\bbR^d )}\lesssim 
m_d (F+K)^{1/q'}m_d (F)^{1/p}
\end{equation*}
yields 
\begin{equation}\label{ineq12.5}
m_d (F)^{q' /p'}\lesssim m_d (F+K),
\end{equation}
unless $q=1$.
In particular, if $(1/p,1/q)$ lies on the open segment joining $(1,1)$ and $\big(d/(2d-\alpha),(d-\alpha)/(2d-\alpha)\big)$, 
\eqref{ineq12.5} is \eqref{est4}. (We deduced inequalities of the form \eqref{est4} from Lemma \ref{lemma1}. If the measures $\lambda_j$ in Lemma \ref{lemma1} are all equal, then the conclusion of Lemma \ref{lemma1} is an estimate like \eqref{ineq11}.)

(iv) If $\lambda$ is the (middle thirds) Cantor-Lebesgue measure on $\bbR$, then \eqref{ineq11} holds for $(1/p,1/q)=(2/3,1/3)$.
Thus Proposition \ref{prop1} yields $\dim_h (E+K)\geq (\dim_h (E)+1)/2$ for the Cantor set $K$. This improves the trivial estimate 
for $\dim_h (E+K)$
only if $\dim_h (E)>2\log 2/\log 3-1$.

Here is the proof of Proposition \ref{prop1}: (The material through Lemma \ref{lemma3} is, for the reader's convenience, repeated from
\cite{O.75}.)
For $\rho >0$, let $K_\rho$ be the kernel defined on $\bbR ^d$ by $K_\rho (x)=|x|^{-\rho}\chi_{B(0,R)}(x)$ where 
$R=R(d)$ is positive.
Suppose that the finite nonnegative Borel measure $\nu$ is a $\gamma$-dimensional measure on $\bbR ^d$  
in the sense that $\nu \big( B(x,\delta )\big)\leq C(\nu )\, \delta ^{\gamma}$ for all $x\in\bbR ^d$ 
and $\delta >0$. If $\rho <\gamma$ it follows that 
\begin{equation*}
\nu\ast K_\rho \in L^{\infty}(\bbR ^d ).
\end{equation*}
Also 
\begin{equation*}
\nu\ast K_\rho \in L^{1}(\bbR ^d )
\end{equation*}
so long as $\rho <d$. Thus, for $\epsilon>0$, 
\begin{equation}\label{ineq8.4}
\nu\ast K_\rho \in L^{p}(\bbR ^d ), \ \rho=\gamma +\frac{1}{p}(d-\gamma)-\epsilon
\end{equation}
by interpolation. 
The following lemma is a weak converse of this observation.
\begin{lemma}\label{lemma3} If \eqref{ineq8.4} holds with $\epsilon =0$ and $p>1$, then 
$\nu$ is absolutely continuous with respect to Hausdorff measure of dimension
$\gamma -\epsilon$ for any $\epsilon >0$. Thus the support of $\nu$ has Hausdorff dimension 
at least $\gamma$.
\end{lemma}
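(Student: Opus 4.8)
The plan is to extract a Frostman-type upper bound on the $\nu$-measure of balls from the integrability \eqref{ineq8.4} (taken with $\epsilon=0$) and then convert it into absolute continuity through a density argument, finally closing a gap between exponents by iteration. Throughout write $g=(\nu\ast K_\rho)^p\in L^1(\bbR^d)$ and $\rho=\gamma+\tfrac1p(d-\gamma)$. The starting point is a one-scale lower bound for the potential: since $K_\rho(w)=|w|^{-\rho}\ge (2\delta)^{-\rho}$ whenever $|w|\le 2\delta<R$, for every $z\in B(x,\delta)$ one has $\nu\ast K_\rho(z)\ge (2\delta)^{-\rho}\,\nu\big(B(x,\delta)\big)$. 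Raising to the $p$-th power, averaging over $z\in B(x,\delta)$, and using $\rho-d/p=\gamma/p'$, I would obtain the ball inequality
\begin{equation*}
\nu\big(B(x,\delta)\big)\lesssim \delta^{\gamma/p'}\Big(\int_{B(x,\delta)}g\,dm_d\Big)^{1/p},\qquad 2\delta<R.
\end{equation*}

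It suffices to prove $\nu\ll\mathcal H^{s}$ for each fixed $s<\gamma$, because a nonzero finite measure absolutely continuous with respect to $\mathcal H^{\gamma-\epsilon}$ for every $\epsilon>0$ cannot live on a set of dimension below $\gamma$, which gives the asserted bound on $\dim_h(\mathrm{supp}\,\nu)$. Fix $s<\gamma$ and put $t=t(s):=ps-(p-1)\gamma$, so the ball inequality rewrites as $\big(\nu(B(x,\delta))/\delta^{s}\big)^p\lesssim \delta^{-t}\int_{B(x,\delta)}g\,dm_d$. I would then invoke the elementary fact that for $g\in L^1$ and $t<d$ the set $Z_s=\{x:\limsup_{\delta\to0}\delta^{-t}\int_{B(x,\delta)}g\,dm_d>0\}$ is $\mathcal H^{t}$-null: cover each level set $\{\limsup>c\}$ (which has Lebesgue measure zero since $t<d$) by a fine family of balls lying inside an open neighborhood of small $g$-integral, and apply the $5r$-covering lemma. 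Hence the upper $s$-density $\overline D^{s}(\nu,x)=\limsup_{\delta\to0}\nu(B(x,\delta))/\delta^{s}$ vanishes off the $\mathcal H^{t(s)}$-null set $Z_s$, and one always has $t(s)<s$ for $s<\gamma$.

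To pass from vanishing density off $Z_s$ to $\nu\ll\mathcal H^s$ I would use the standard density comparison theorem (if $\overline D^s(\nu,\cdot)\le\lambda$ on a set $A$ then $\nu(A)\lesssim_s\lambda\,\mathcal H^s(A)$), which reduces the matter to showing $\nu(Z_s)=0$. This is where the argument becomes circular: deducing $\nu(Z_s)=0$ from $\mathcal H^{t(s)}(Z_s)=0$ requires $\nu\ll\mathcal H^{t(s)}$, an instance of the statement being proved, but at the \emph{smaller} exponent $t(s)<s$. Two features rescue this. First, for $s<\gamma/p'$ one has $t(s)<0$, so the ball inequality forces $\overline D^{s}(\nu,\cdot)\equiv0$ and $\nu\ll\mathcal H^s$ unconditionally; this is the base case. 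Second, the threshold of reachable exponents improves under the map $\sigma\mapsto \sigma/p+\gamma/p'$, whose iterates starting from $\gamma/p'$ increase to the fixed point $\gamma$. Running this bootstrap — at each stage knowing $\nu\ll\mathcal H^{\tau}$ for $\tau<\sigma$ lets us kill $Z_s$ for all $s$ with $t(s)<\sigma$, i.e. for $s<\sigma/p+\gamma/p'$ — yields $\nu\ll\mathcal H^s$ for every $s<\gamma$.

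The main obstacle is exactly this self-improvement. The one-scale lower bound for $\nu\ast K_\rho$ ``sees'' only a single ball and by itself delivers only the weaker conclusion $\dim_h(\mathrm{supp}\,\nu)\ge\gamma/p'$; upgrading $\gamma/p'$ to $\gamma$ is impossible without feeding the $\mathcal H^{t(s)}$-nullity of the exceptional density set back through the density comparison theorem in an inductive loop, exploiting that the kernel exponent is calibrated so that $t$ fixes $\gamma$. A secondary technical point requiring care is the covering step proving $\mathcal H^{t}(Z_s)=0$: the neighborhoods of the (Lebesgue-null) level sets must be chosen with small $g$-integral, so that the bound produced by the $5r$-covering lemma genuinely tends to zero rather than merely to $\int_{\overline{Z_s}}g$.
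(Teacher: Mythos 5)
Your proposal is correct, but it takes a genuinely different route from the paper. The paper's proof is functional-analytic: from $\nu\ast K_\rho\in L^p$ it deduces $\|\nu\ast\chi_{B(0,\delta)}\|_{L^p}\lesssim\delta^\rho$, hence that $\nu$ lies in the Besov space $B^s_{p,q}$ for $s<\rho-d=(\gamma-d)/p'$; it then pairs $\nu$ against smooth majorants of $\chi_K$ via the duality of $B^s_{p,q}$ with $B^{-s}_{p',q'}$ and invokes Stocke's comparison $A_{t,p',q'}(K)\lesssim H_{d-tp'}(K)$ of Besov capacity with Hausdorff measure, arriving directly at the quantitative bound $\nu(K)\lesssim H_{\gamma-\epsilon}(K)$. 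You instead run a purely real-variable argument: the pointwise lower bound for the potential gives the Frostman-type ball inequality $\nu\big(B(x,\delta)\big)\lesssim\delta^{\gamma/p'}\big(\int_{B(x,\delta)}g\,dm_d\big)^{1/p}$; Lebesgue differentiation plus the $5r$-covering lemma shows the exceptional set $Z_s$ is $\mathcal{H}^{t(s)}$-null; the density comparison theorem converts vanishing upper $s$-density into $\nu$-nullity off $Z_s$; and the bootstrap on the exponent removes the loss that a single application of the ball inequality would leave (by itself it only yields dimension $\geq\gamma/p'$). I checked the iteration: in the base case $t(s)<0$ forces $Z_s=\emptyset$, and in the inductive step $t(s)<\sigma$ lets you apply the already-established absolute continuity at the strictly smaller exponent $t(s)$, so there is no circularity; since the map $\sigma\mapsto\sigma/p+\gamma/p'$ is a contraction with fixed point $\gamma$, the thresholds $\sigma_n$ increase to $\gamma$ and you get $\nu\ll\mathcal{H}^s$ for every $s<\gamma$, which is the lemma. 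As for what each approach buys: the paper's argument is shorter and gives a quantitative domination of $\nu$ by Hausdorff measure, but it rests on Besov duality and a nontrivial capacity estimate imported from the literature; yours is self-contained modulo classical covering and density theorems, avoids Besov spaces entirely, and makes transparent exactly where $p>1$ enters (the contraction ratio $1/p$ of your exponent map), at the cost of running the induction and of producing absolute continuity in the null-set sense rather than a measure-theoretic domination — which is all the dimension conclusion requires.
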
 

\begin{proof}

Recall from \cite{BL} (see p. 140) that, for $s\in\bbR$ and $1\leq p,q\leq\infty$, the norm $\|f\|^s_{p,q}$ of a distribution $f$ on $\bbR ^d$ in the Besov space $B^s_{p,q}$ can be defined by

$$
\|f\|^s_{pq}=\|\psi\ast f\|_{L^{p}(\bbR ^d )}+\Big(\sum_{k=1}^{\infty} \big(2^{sk}\,\|\phi _k \ast f\|_{L^{p}(\bbR ^d )}\big)^q \Big)^{1/q}
$$
for certain fixed  $\psi\in\cS (\bbR^d )$, $\phi \in C^{\infty}_c (\bbR^d )$, and where $\phi _k (x)=2^{kd}\phi (2^k x)$. If $\nu \ast K_\rho \in L^{p}(\bbR ^d )$, then $\|\nu \ast \chi_{B(0,\delta )} \|_{ L^{p}(\bbR ^d )}\lesssim \delta ^\rho$. It follows that 
$\|\nu\|^s_{pq}<\infty$ if $s<\rho -d=(\gamma -d)/p'$. Now, for $t>0$ and $1<p',q'<\infty$, the Besov capacity $A_{t,p',q'}(K)$ of a compact $K\subset \bbR ^d$ is defined by

$$
A_{t,p',q'}(K)=\inf \{\|f\|^t_{p',q'}:f\in C^{\infty}_c (\bbR^d ),\,f\geq \chi_K \}.
$$
It is shown in \cite{S} (see p. 277) that $A_{t,p',q'}(K)\lesssim H_{d-tp'}(K)$. Thus it follows from the duality of $B^s_{p,q}$ and $B^{-s}_{p',q'}$ that 

$$
\nu (K)\lesssim \|\nu\|^s_{pq}\, A_{-s,p',q'}(K)\lesssim   H_{d+sp'}(K)=H_{\gamma -\epsilon} (K)
$$
if $s=(\gamma -d-\epsilon)/p'$.

\end{proof}

\noindent Now to prove Proposition \ref{prop1}, assume that $E\subset\bbR^d$ satisfies $\dim_h (E)=\beta$. Then, if $\epsilon >0$, $E$ supports a probability measure
$\mu$ such that $\mu\ast K_\rho \in L^p (\bbR^d )$ where $\rho =\beta +(d-\beta )/p-\epsilon$. The hypothesis \eqref{ineq11}
implies that $\lambda\ast\mu\ast K_\rho \in  L^q (\bbR^d )$. Then, since $\mu\ast\lambda$ is supported on $E+K$, 
Lemma \ref{lemma3} implies that 
$$
\dim_h (E+K)\geq q^{'}\big[\frac{d}{p}-\frac{d}{q}+\frac{\alpha}{p^{'}}\big]-q^{'}\epsilon .
$$

We conclude this paper with a short discussion of the situation when $K$ is a curve in $\bbR^d$, and we begin with 
the case $d=2$. If $A\subset
\bbR$ satisfies $\dim_h (A\times A)>2\dim_h (A)$ then taking $K=K_0$, where $K_0$ is the polygonal curve mentioned after 
\eqref{est1}, and $E=A\times A$ shows that the analog for Hausdorff dimension  of \eqref{est1} can fail for curves which satisfy only the weak nondegeneracy requirement of \S \ref{Minkowski}. Things are much improved if we require that $K$ be 
a $C^{(2)}$ curve in $\bbR^2$ (and the situation is analogous for hypersurfaces in $\bbR^d$): then, 
if $K$ is not a line segment -- i.e., if $K$ is not degenerate in the context of the problem at hand -- the 
arclength measure $\lambda$ on an interval of $K$ where the curvature does not vanish will satisfy $|\widehat{\lambda}(\xi )|\lesssim |\xi |^{-1/2}$, 
$K$ will be a Salem set, and so we will have, by \eqref{ineq9.5},
\begin{equation}\label{ineq13}
\dim_h (E+K)\geq\min\{\dim_h (E)+1, 2\},\ E\subset\bbR^2 .
\end{equation}

An analog for $\bbR^3$ of \eqref{ineq13} is also not difficult to establish. But the argument depends on deep results from
 \cite{pramseeg} about the Sobolev mapping properties of averaging operators associated with certain curves in $\bbR^3$.
 Let $K=\{\gamma (t):0<t<1\}\subset\bbR^3$ be a curve $\{\gamma (t):0\leq t\leq1\}$ of finite type
(see \cite{pramseeg} for the definition) and let $\lambda$ be the measure induced on $K$ by setting $d\lambda =dt$. 
Suppose that $0<\beta\leq 2$ and that
$\mu$ is a Borel measure on $E$ which is $\beta$-dimensional in the sense that $\mu \big( B(x,\delta )\big)
\lesssim \delta^\beta$ for $\delta >0,\, x\in\bbR^3$. Fix $\epsilon>0$.
We will show that 
\begin{equation}\label{ineq26}
\delta^{-3}\langle \chi_{B(0,\delta )}\ast\mu\ast\lambda ,\chi_F \rangle
\lesssim \delta^{-[3-(1+\beta )+\epsilon ]/p}m_3 (F)^{1/p}
\end{equation}
for some $p$, all Borel $F\subset\bbR^3$, and all $\delta >0$. 
By a well known argument from \cite{B} it will then follow that
\begin{equation}\label{ineq25}
\dim_h (E+K)\geq\min\{\beta+1, 3\},\ E\subset\bbR^3 .
\end{equation}
With $\|\cdot\|_{p,s}$ denoting the norm of the $L^p$ Sobolev space 
$L^p_s (\bbR^3 )$, Theorem 1.1 in \cite{pramseeg} furnishes $p$ such that there is the convolution estimate
\begin{equation}\label{ineq27}
\| \lambda\ast f\|_{p,1/p}\lesssim \|f\|_{L^p (\bbR^3 )}.
\end{equation}
On the other hand, \eqref{ineq8.4} shows that 
\begin{equation*}
\mu\ast K_\rho \in L^{p'}(\bbR ^3 ) \ \, \text{if}\ \,  \rho -3=\frac{\beta -3}{p}-\epsilon
\end{equation*}
and so 
\begin{equation*}
\mu\ast K_\rho \in L^{p'}_{-1/p}(\bbR ^3 )  \ \, \text{if}\ \,  \rho -3=\frac{\beta -2}{p}-\epsilon .
\end{equation*}
Taking $f=\chi_F$ in \eqref{ineq27}
and $\rho =3+\frac{\beta -2}{p}-\epsilon$, this gives 
\begin{equation*}
\langle K_\rho \ast\mu\ast\lambda ,\chi_F \rangle
\lesssim m_3 (F)^{1/p}.
\end{equation*}
Since $\delta^{-\rho}\chi_{B(0,\delta )}\lesssim K_{\rho}$, \eqref{ineq26} follows.

One can hope that the analog of \eqref{ineq25} persists for nondegenerate curves in $\bbR^d$ when $d>3$. 
But there is currently no result like Theorem 1.1 in \cite{pramseeg} available in higher dimensions, and we can only 
make a few observations. If $d\lambda$ is $dt$ on a segment $K$ of 
the model curve $(t,t^2 ,\dots ,t^d )$ in $\bbR^d$, then the best estimate for $\widehat{\lambda}$ is $|\widehat{\lambda}(\xi )|\lesssim
|\xi |^{-1/d}$. Thus \eqref{ineq10.5} yields only  
\begin{equation*}
\dim_h (E+K)\geq\min\{\dim_h (E)+2/d, d\}.
\end{equation*}
And, using Christ's theorem from \cite{C2} about the $L^p \rightarrow L^q$ convolution properties of $\lambda$, Proposition \ref{prop1}
gives 
\begin{equation*}
\dim_h (E+K)\geq\min\{(1-1/d)\dim_h (E)+1, d\}.
\end{equation*}
On the other hand, given \eqref{ineq25} it is easy to see that if $E\subset\bbR^d$ satisfies $\dim_h (E)\leq 2$, then
\begin{equation*}
\dim_h (E+K)\geq\dim_h (E)+1.
\end{equation*}
This is because Marstrand's projection theorem \cite{M} implies that for almost all orthogonal projections $\pi$ of 
$\bbR^d$ onto a three-dimensional subspace we have $\dim_h \big(\pi (E)\big)=\dim_h (E)$. 
Choose such a $\pi$ and note that, by \eqref{ineq25} and the fact that $\pi (K)$ is of finite type,
\begin{equation*}
\dim_h (E+K)\geq \dim_h \big(\pi (E+K)\big)= \dim_h \big(\pi (E)+\pi (K)\big)\geq \dim_h \big(\pi (E)\big)+1.
\end{equation*} 

\end{document}